\documentclass[11pt]{article}
\usepackage{amsmath, amssymb, amscd, amsthm, amsfonts,bm}
\usepackage{graphicx,cite,float,booktabs}
\usepackage{hyperref,subcaption,verbatim}
\usepackage[linesnumbered,ruled,vlined]{algorithm2e}
\usepackage{algorithmic}

\title{Gaussian-process surrogate indicators for residual-based adaptive GMsFEM}
\author{Siqing Liu\thanks{Department of Mathematics, The Chinese University of Hong Kong} 
\and Eric Chung\thanks{Department of Mathematics, The Chinese University of Hong Kong}  \and Yiran Wang\thanks{Department of Mathematics, The University of Alabama} }
\date{}

\newtheorem{theorem}{Theorem}

\newtheorem{proposition}[theorem]{Proposition}
\newtheorem{corollary}[theorem]{Corollary}
\theoremstyle{definition}
\newtheorem{assumption}[theorem]{Assumption}
\theoremstyle{remark}
\newtheorem{remark}{Remark}

\newcommand{\rr}{\mathbb{R}}
\newcommand{\mc}[1]{\mathcal{#1}}
\newcommand{\mr}[1]{\mathrm{#1}}
\newcommand{\mf}[1]{\mathbf{#1}}
\newcommand{\mb}[1]{\mathbb{#1}}

\newcommand{\omi}{\omega_i}
\newcommand{\ze}{\zeta}
\newcommand{\ka}{\kappa}

\newcommand{\lam}{\lambda}
\newcommand{\bbet}{\bm{\beta}}

\newcommand{\rkhs}{\mathcal{H}_c}
\newcommand{\ums}{u_{\mathrm{ms}}}

\newcommand{\hmns}{$H^{-1}$-residual adaptive GMsFEM}

\newcommand{\vms}{V_{\mathrm{ms}}}
\newcommand{\hmo}{H^{-1}}
\newcommand{\vsnap}{V_{\mathrm{snap}}}

\newcommand{\voff}{V_{\mathrm{off}}}

\newcommand{\gp}{\mathcal{GP}}

\newcommand{\cerr}{C_{\mathrm{err}}}

\newcommand{\emind}{\eta_{m,i}^2}

\newcommand{\snap}{\mathrm{snap}}

\newcommand{\oprt}[1]{-\nabla\cdot(\kappa(x)\nabla #1)}
\newcommand{\brcs}[1]{\left\{#1\right\}}
\newcommand{\norm}[1]{\Vert #1 \Vert}

\newcommand{\keywords}[1]{
    \vspace{1ex}
    \noindent \textbf{Keywords:} #1
}

\usepackage{xcolor}
\begin{document}

\maketitle

\begin{abstract}
Residual-based adaptive GMsFEM for high-contrast elliptic problems repeatedly evaluates local weighted $H^{-1}$ indicators on every coarse neighborhood, making indicator evaluation a recurring cost in repeated-query settings. We introduce a non-intrusive Gaussian-process (GP) surrogate for the indicator scores used in D\"orfler marking. The operational predictor is the GP posterior mean, algebraically equivalent to a kernel ridge regression (KRR) estimator under the stated convention; it uses compressed local solution and spectral features without changing the multiscale solve, local spectral construction, or basis enrichment. A nonuniform perturbed-marking result quantifies how pointwise score errors affect the exact indicator mass captured by surrogate-selected neighborhoods, while a conditional bounded-discrepancy KRR pathway identifies sufficient assumptions for such score bounds. In controlled held-out in-distribution tests, the surrogate-guided method gives error-versus-DoF trends comparable with classical $H^{-1}$-residual offline adaptivity and evaluates the online indicator component $2.0$--$2.1$ times faster, excluding offline data generation and GP training.
\end{abstract}

\keywords{Gaussian process; GMsFEM; Multiscale problems; Adaptive numerical methods}

\section{Introduction}\label{section-introduction}

Multiscale phenomena are widely present in nature, such as flow in porous media, biological transport, and atmospheric turbulence. Most problems are modeled by partial differential equations (PDEs) with high-contrast multiscale coefficients. Solving these equations directly using traditional numerical methods can be very time-consuming \cite{efendiev_multiscale_2004,allaire_multiscale_2005}. Therefore, some types of model reduction methods have been developed to solve these multiscale problems. The methods can be categorized into global \cite{hinze_proper_2005,ghommem_mode_2013} and local approaches \cite{hou_multiscale_1997,efendiev_generalized_2013}. Global model reduction methods require costly computations in the offline stage and lack local adaptivity. In this paper, our research focuses on the local multiscale model reduction methods that can add local basis functions adaptively.

The local model reduction methods include two classes: homogenization methods \cite{durlofsky_numerical_1991,chen_coupled_2003} and multiscale methods \cite{hou_multiscale_1997,efendiev_generalized_2013,kim_data-driven_2024,nikiforov_meshfree_2023}. Many of these methods solve multiscale PDEs on the coarse mesh determined by a fine grid. Of the existing multiscale methods,  the generalized multiscale finite element method (GMsFEM) is one of the most widely used due to its flexibility with regard to the number of local basis functions \cite{efendiev_generalized_2013}. The main idea is to divide the computation into the offline and online stages, construct the space of local basis functions in the offline stage, and use them to obtain multiscale basis functions in the online stage. However, the number of multiscale basis functions on each coarse region cannot be added adaptively based on local information in the online stage.

To tackle this problem, the adaptive GMsFEM has attracted significant attention \cite{chung2023multiscalebook}. A classical one is the \hmns \cite{chung_adaptive_2014}, which serves as the foundation for many subsequent approaches \cite{chung_goal-oriented_2016,wang_adaptive_2023,wang_ams-net_2022,xu_adaptive_2023}. This adaptive method is based on an error indicator defined as the $\hmo$ norm of the residual. The error indicator can determine the number of local basis functions to add in each iteration. It was shown that the weighted $\hmo$-norm residual enables the error indicator to be robust for high-contrast media \cite{chung_adaptive_2016}. However, the computation of this type of norm is affected by the local heterogeneities, which increase the computational burden. To address this issue, the goal-oriented adaptive GMsFEM was developed \cite{chung_goal-oriented_2016}, but this method needs to solve both primal and dual multiscale problems. Moreover, the residual-driven basis construction method without adaptivity is also studied in recent years \cite{wang_localglobal_2022}.

Machine-learning surrogates can reduce the cost of repeatedly evaluated numerical quantities without replacing the governing PDE solver \cite{chen_solving_2021,mora_operator_2025,cuomo_scientific_2022,brunton_promising_2024}. Here the surrogate target is deliberately narrow: it is the local indicator score used to rank neighborhoods for offline enrichment. The operational predictor is the GP posterior mean, algebraically equivalent to a KRR estimator under the convention stated in Section \ref{subsec:inference of error indicators} \cite{rasmussen_gaussian_2005,kanagawa_gaussian_2018}. The GP formulation supplies marginal-likelihood training and automatic relevance determination (ARD) kernel calibration, while providing a natural route to future uncertainty-aware marking. Posterior variance is not used by the present algorithm.

We therefore consider a fixed source term, discretization, and admissible coefficient distribution in which one fitted model is reused and its offline cost is amortized over many adaptive solves. The surrogate replaces repeated local dual-norm evaluations for marking, while the multiscale solution and selected offline bases remain standard GMsFEM components. Our contributions are: (i) a non-intrusive posterior-mean indicator surrogate based on compact local solution and spectral features, with all neighborhood scores assembled in one batchable matrix evaluation; (ii) an explicit treatment of deterministic labels, feature-compression discrepancy, offline cost, and the repeated-query applicability boundary; (iii) a nonuniform perturbation result linking pointwise surrogate-score accuracy to the exact indicator mass captured by D\"orfler marking, together with a conditional RKHS/KRR pathway to such score bounds; and (iv) controlled held-out in-distribution validation showing comparable error-versus-DoF trends and a $2.0$--$2.1$ online indicator-evaluation speedup in the four reported configurations. Local feature construction, kernel evaluation, and offline label generation admit parallel work across samples or neighborhoods, although dense GP factorization remains centralized.

The remainder of the paper is organized as follows. Sections \ref{sec:GMsFEM} and \ref{sec:hminus1 adaptive GMsFEM} recall the GMsFEM construction and the classical $H^{-1}$-residual adaptive procedure. Section \ref{sec:GP models} introduces the GP surrogate indicators. Section \ref{sec:analysis} gives the conditional marking analysis. Numerical results are presented in Section \ref{sec:Numerical experiments}.

\section{GMsFEM for high-contrast problems}\label{sec:GMsFEM}
\subsection{Overview}\label{subsec:overview GMsFEM}
We recall the conforming GMsFEM construction for the high-contrast elliptic problem \cite{efendiev_generalized_2013,chung_residual-driven_2015,chung_adaptive_2016}. Let $D\subset\mathbb{R}^d$ be a bounded computational domain. We consider
\begin{equation}\label{eq:elliptic equation}
    \begin{split}
        \oprt{u(x)}&=f\quad\mr{in}\;D,\\
        u&=0\quad\mr{on}\;\partial D,
    \end{split}
\end{equation}
where $\ka(x)$ is a heterogeneous, high-contrast coefficient. Let $\mc{T}^H$ be a conforming coarse partition of $D$ with coarse elements $K$ and mesh size $H$. Its coarse nodes are denoted by $\{x_i\}_{i=1}^N$. For each coarse node $x_i$, the corresponding coarse neighborhood is
\begin{equation}\label{eq:coarse neighborhood}
    \omi=\bigcup\{K\in\mc{T}^H:x_i\in\overline{K}\}.
\end{equation}
Thus, $\omi$ is a union of coarse elements, rather than a set of nodes. Let $\mc{T}^h$ be a conforming refinement of $\mc{T}^H$, with $h\ll H$, and let $V$ be the conforming piecewise-linear finite element space on $\mc{T}^h$ satisfying the homogeneous boundary condition. In this work, $u\in V$ denotes the fine-grid reference solution, defined by
\begin{equation}\label{eq:CG variational form}
    a(u,v)=(f,v),\qquad \forall v\in V,
\end{equation}
where
\[
    a(u,v)=\int_D\ka(x)\nabla u\cdot\nabla v\;\mr{d}x,
    \qquad
    (f,v)=\int_Dfv\;\mr{d}x.
\]
We equip $V$ with the energy norm $\norm{v}_V^2=a(v,v)$. The fine mesh is assumed sufficiently resolved for $u$ to serve as the reference solution throughout the paper.

\begin{figure}[H]
    \centering
    \includegraphics[width=0.6\linewidth]{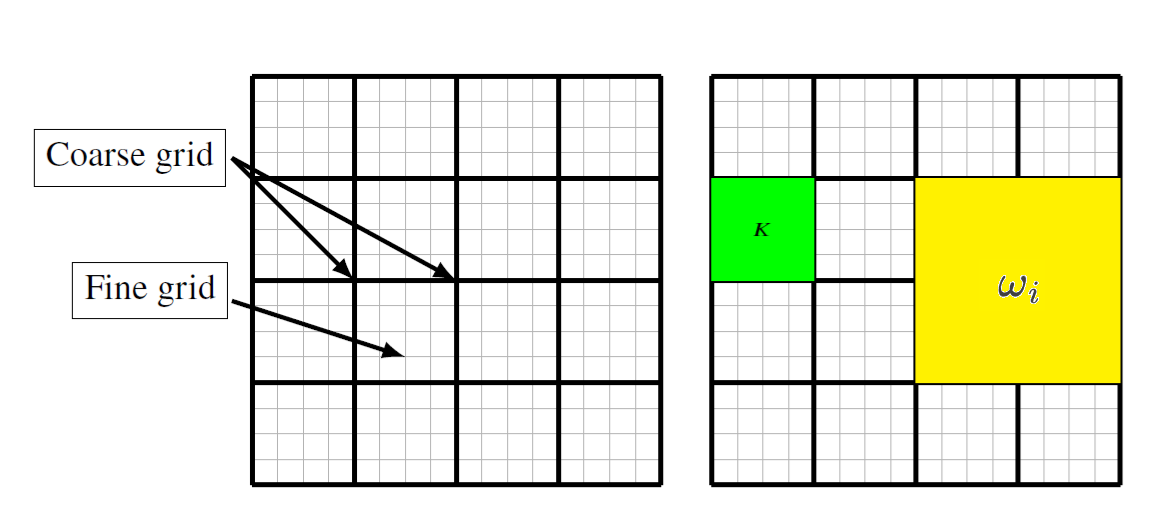}
    \caption{Illustration of a coarse neighborhood and coarse element.}
    \label{fig:CoarseFineMeshes}
\end{figure}

GMsFEM seeks a low-dimensional multiscale space while retaining the fine-scale effects of $\ka$. For each $x_i$, let $\{\psi_k^{\omi}\}_{k=1}^{l_i}$ be multiscale basis functions satisfying
\[
    \operatorname{supp}(\psi_k^{\omi})\subseteq\overline{\omi}.
\]
The number $l_i$ may vary with the coarse neighborhood. The multiscale space is
\[
    \vms=\operatorname{span}\{\psi_k^{\omi}:1\le i\le N,\ 1\le k\le l_i\},
\]
and the multiscale solution $\ums\in\vms$ solves
\begin{equation}\label{eq:variationalGMsFEM}
    a(\ums,v)=(f,v),\qquad \forall v\in\vms.
\end{equation}
Retaining multiple modes on a neighborhood allows the reduced space to represent fine-scale structures, such as distinct high-conductivity channels, that a single local mode may miss. Allowing $l_i$ to vary concentrates degrees of freedom where the local coefficient and solution are more complex and provides the mechanism used by adaptive enrichment.
The essential task is the construction of the local basis functions. We use offline basis functions, computed before the adaptive solution procedure, as the candidate functions for both the classical method and the proposed surrogate-assisted method.

\subsection{The offline basis functions}
For each coarse neighborhood $\omi$, the offline construction has three stages: form a rich snapshot space, select its dominant modes through a local spectral problem, and multiply the selected modes by a partition-of-unity function to obtain conforming offline basis functions. In this work, the snapshot space is formed by $\ka$-harmonic extensions. Let $\mc{J}_h(\omi)$ be the set of fine-grid nodes on $\partial\omi$, and let $L_i=|\mc{J}_h(\omi)|$. For each node $x_j\in\mc{J}_h(\omi)$, $j=1,\ldots,L_i$, define the discrete boundary delta function by
\begin{equation*}
    \delta_j^h(x_k)=\left\{\begin{array}{cc}
        1, & k=j, \\
        0, & k\neq j,
    \end{array}\right.
    \qquad x_k\in \mc{J}_h(\omi).
\end{equation*}
The corresponding snapshot function $\psi_j^{\omi,\snap}$ is the fine-grid solution of
\begin{equation*}
    \begin{aligned}
    -\nabla\cdot\left(\ka(x)\nabla\psi_j^{\omi,\snap}\right)&=0,\quad&&\mr{in}\;\omi,\\
    \psi_j^{\omi,\snap}&=\delta_j^h,\quad&&\mr{on}\;\partial\omi.
    \end{aligned}
\end{equation*}
Thus,
\[
    \vsnap^{\omi}=\operatorname{span}\{\psi_j^{\omi,\snap}:1\le j\le L_i\}.
\]

The offline space is obtained by selecting dominant modes of $\vsnap^{\omi}$ \cite{efendiev_multiscale_2011}. Define
\[
    a_{\omi}(v,w)=\int_{\omi}\ka(x)\nabla v\cdot\nabla w\;\mr{d}x,
    \qquad
    s_{\omi}(v,w)=\int_{\omi}\widetilde{\ka}(x)vw\;\mr{d}x,
\]
where
\[
    \widetilde{\ka}(x)
    =
    \ka(x)\sum_{r=1}^{N}H^2|\nabla\chi_r(x)|^2.
\]
Here, $\chi_i$ is the standard multiscale partition-of-unity function associated with $x_i$: for every $K\in\mc{T}^H$ satisfying $K\subset\omi$, it solves
\[
    -\nabla\cdot(\ka(x)\nabla\chi_i)=0\quad\mr{in}\;K,
    \qquad
    \chi_i=g_i\quad\mr{on}\;\partial K,
\]
where $g_i$ is the restriction to $\partial K$ of the continuous piecewise-linear coarse nodal function that equals one at $x_i$ and zero at the other vertices of $K$. The local spectral problem is
\begin{equation}\label{eq:original spectral problem}
    a_{\omi}(\phi,\varphi)=\lam s_{\omi}(\phi,\varphi),
    \qquad
    \forall\varphi\in\vsnap^{\omi}.
\end{equation}
Let $\{(\lam_k^{\omi},\Phi_k^{\omi})\}_{k=1}^{L_i}$ be the eigenpairs ordered so that
\[
    0<\lam_1^{\omi}\le\lam_2^{\omi}\le\cdots\le\lam_{L_i}^{\omi}.
\]
For an implementation, expand the snapshot functions in the nodal, piecewise-linear basis of $V$ restricted to $\omi$ and define
\[
    A_{mn}^{\omi}=a_{\omi}(\psi_m^{\omi,\snap},\psi_n^{\omi,\snap}),
    \qquad
    S_{mn}^{\omi}=s_{\omi}(\psi_m^{\omi,\snap},\psi_n^{\omi,\snap}),
    \qquad
    1\le m,n\le L_i.
\]
the spectral problem becomes
\begin{equation}\label{eq:discretized spectral problem}
    A^{\omi}\Phi_k^{\omi}=\lam_k^{\omi}S^{\omi}\Phi_k^{\omi},
\end{equation}
where $A^{\omi}=[A_{mn}^{\omi}]$ and $S^{\omi}=[S_{mn}^{\omi}]$. Retaining the $l_i$ eigenvectors associated with the smallest eigenvalues gives
\begin{equation*}
    \phi_k^{\omi,\mr{eig}}
    =
    \sum_{j=1}^{L_i}(\Phi_k^{\omi})_j\psi_j^{\omi,\snap},
    \qquad
    k=1,\ldots,l_i.
\end{equation*}
The conforming offline basis functions are
\begin{equation}\label{eq:local basis functions}
    \psi_k^{\omi}=\chi_i\phi_k^{\omi,\mr{eig}},
    \qquad
    k=1,\ldots,l_i.
\end{equation}
The local offline space is $\voff^{\omi}=\operatorname{span}\{\psi_k^{\omi}:1\le k\le l_i\}$, and the global offline space is
\[
    \voff
    =
    \operatorname{span}\{\psi_k^{\omi}:1\le i\le N,\ 1\le k\le l_i\}.
\]
When no adaptive enrichment has yet been performed, we take $\vms=\voff$ in (\ref{eq:variationalGMsFEM}).

\section{\hmns}\label{sec:hminus1 adaptive GMsFEM}
We next recall the offline adaptive GMsFEM \cite{chung_adaptive_2014,chung_adaptive_2016}. At enrichment level $m$, let $V_{\mathrm{off}}^m$ be the current multiscale space, and let $\ums^m\in V_{\mathrm{off}}^m$ solve
\[
    a(\ums^m,v)=(f,v),
    \qquad
    \forall v\in V_{\mathrm{off}}^m.
\]
Let $l_i^m$ be the number of offline basis functions currently retained on $\omi$. The local test space and its energy norm are
\[
    V_i=H_0^1(\omi)\cap V,
    \qquad
    \norm{v}_{V_i}^2=\int_{\omi}\ka(x)|\nabla v|^2\;\mr{d}x.
\]
The local residual is the linear functional
\begin{equation}\label{eq:Reisual}
    R_{m,i}(v)
    =
    \int_{\omi}fv\;\mr{d}x
    -
    \int_{\omi}\ka(x)\nabla\ums^m\cdot\nabla v\;\mr{d}x,
    \qquad
    \forall v\in V_i.
\end{equation}
Its dual norm
\[
    \norm{R_{m,i}}_{V_i^*}
    =
    \sup_{v\in V_i\setminus\{0\}}
    \frac{|R_{m,i}(v)|}{\norm{v}_{V_i}}
\]
is the weighted local $H^{-1}$ residual norm. The classical a posteriori estimate \cite{chung_adaptive_2014} gives
\begin{equation}\label{eq:error estimate of ums}
    \norm{u-\ums^m}_V^2
    \le
    \cerr\sum_{i=1}^{N}
    \norm{R_{m,i}}_{V_i^*}^2
    \left(\lam_{l_i^m+1}^{\omi}\right)^{-1},
\end{equation}
where $\cerr$ is the uniform constant from the cited estimate and $\lam_{l_i^m+1}^{\omi}$ is the first eigenvalue whose associated eigenfunction is not included in the local space on $\omi$. This motivates the local indicator
\begin{equation}\label{eq:hminus1 error indicator}
    \emind
    =
    \norm{R_{m,i}}_{V_i^*}^2
    \left(\lam_{l_i^m+1}^{\omi}\right)^{-1},
    \qquad
    i=1,\ldots,N.
\end{equation}
The indicator is nonnegative and measures the contribution of $\omi$ to the right-hand side of (\ref{eq:error estimate of ums}).

\begin{algorithm}[H]
\SetAlgoLined
\KwIn{Marking parameter $0<\theta<1$, initial space $V_{\mathrm{off}}^1$, enrichment size $s\ge1$, and maximum iteration count $M_{\max}$.}
\For{$m=1,2,\ldots,M_{\max}$}{
    \textbf{Step 1: Obtain the current multiscale solution}\\
    Find $\ums^m\in V_{\mathrm{off}}^m$ such that $a(\ums^m,v)=(f,v)$ for all $v\in V_{\mathrm{off}}^m$\;

    \textbf{Step 2: Compute error indicators}\\
    For each $i=1,\ldots,N$, compute $\emind$ from (\ref{eq:hminus1 error indicator})\;
    Choose a permutation $\sigma_m$ such that $\eta_{m,\sigma_m(1)}^2\ge\cdots\ge\eta_{m,\sigma_m(N)}^2$\;

    \textbf{Step 3: Marking}\\
    Find the smallest $k_m$ such that $\theta\sum_{i=1}^N\emind\le\sum_{j=1}^{k_m}\eta_{m,\sigma_m(j)}^2$\;

    \textbf{Step 4: Enrichment}\\
    \For{$j=1,\ldots,k_m$}{
        Set $i\leftarrow\sigma_m(j)$ and add $\{\psi_{l_i^m+1}^{\omi},\ldots,\psi_{l_i^m+s}^{\omi}\}$ to $V_{\mathrm{off}}^m$\;
        $l_i^{m+1} \leftarrow l_i^m + s$\;
    }
    For unmarked neighborhoods, set $l_i^{m+1}\leftarrow l_i^m$\;
    Construct $V_{\mathrm{off}}^{m+1}$ from the updated local offline spaces\;
}
\caption{\hmns}
\label{algo:hminus1}
\end{algorithm}

For a fixed discretization and local spectral construction, $\cerr$ is independent of the enrichment level; its precise uniformity is that of the a posteriori estimate in \cite{chung_adaptive_2014}. The integer $s$ is an implementation parameter specifying how many precomputed offline basis functions are added to each marked neighborhood, subject to the number of unused local modes. The algorithm is terminated after $M_{\max}$ iterations, or earlier when a prescribed error-indicator tolerance is reached. All enriched functions are selected from the precomputed offline basis. Hence, this procedure is an \emph{offline adaptive} GMsFEM; it should not be confused with residual-driven \emph{online-basis} construction, in which new basis functions are obtained by solving residual local problems \cite{chung_residual-driven_2015}.

The offline construction avoids recomputing candidate basis functions during the adaptive solution procedure. Nevertheless, the local dual norms in (\ref{eq:hminus1 error indicator}) must still be evaluated on every coarse neighborhood at every iteration. This repeated indicator evaluation is the computational component targeted by the GP surrogate introduced next.

\section{GP-based surrogate indicators}\label{sec:GP models}
The $H^{-1}$ residual indicator in (\ref{eq:hminus1 error indicator}) is an exact local quantity for a specified coefficient field and multiscale space. Its direct evaluation is the part of the offline-adaptive procedure that is replaced by a surrogate. The multiscale solve, the local spectral problems, and the enrichment itself are not replaced.

For a coarse neighborhood $\omi$ at enrichment level $m$, we use the feature vector
\begin{equation}\label{eq:vec unified mapping}
\mf{z}_{m,i}=\left(g_1(\omi,m),g_2(\omi,m),g_3(\omi,m),g_4(\omi,m),x_i^\top\right)^\top\in\rr^{d_f},
\end{equation}
where
\begin{equation*}
\begin{aligned}
g_1(\omi,m)&=\norm{\ums^m|_{\omi}}_{L^2(\omi)}, &
g_2(\omi,m)&=\norm{(\ums^m-\ums^{m-1})|_{\omi}}_{L^2(\omi)},\\
g_3(\omi,m)&=\lam_{l_i^m+1}^{\omi}, &
g_4(\omi,m)&=l_i^m.
\end{aligned}
\end{equation*}
Here $\ums^0$ denotes the fixed initialization state used to form the first feature vector. The coordinate $x_i\in\rr^d$ is retained as a vector, so that $d_f=4+d$; in the two-dimensional experiments reported below, the feature dimension is six rather than five. The first two features summarize the local solution magnitude and its change between consecutive enrichment levels, while $g_3$ and $g_4$ encode the next excluded local eigenvalue and the current local-space dimension. They are inexpensive proxies for the current local solution and spectral-truncation state, not replacements for the residual. Their ability to predict the indicator is therefore an empirical property of the specified training distribution. These compressed summaries do not contain the full local coefficient or residual state.

Let $Y_{m,i}=\eta_{m,i}^2$ denote the deterministic indicator obtained by the classical calculation for the state that generated $\mf{z}_{m,i}$. Since distinct local states can have the same compressed feature vector, we do not assume that $Y_{m,i}$ is an exact single-valued function of $\mf{z}_{m,i}$. Relative to the distribution of coefficient fields and adaptive states used for offline sampling, we write
\begin{equation}\label{eq:feature-discrepancy}
Y=m_\star(Z)+r,\qquad m_\star(z)=\mathbb{E}[Y\mid Z=z],\qquad \mathbb{E}[r\mid Z]=0.
\end{equation}
The term $r$ represents feature-compression or model discrepancy. It is not physical measurement noise: once a coefficient field, a discretization, and an adaptive state have been fixed, the computed label $Y$ is deterministic.

\begin{algorithm}[H]
\SetAlgoLined
\KwIn{Marking parameter $0<\theta<1$, initial offline space $\voff^1$, a pre-trained GP surrogate, enrichment parameter $s$, and maximum number of iterations $M_{\max}$.}
\For{$m=1,2,\ldots,M_{\max}$}{
    \textbf{Step 1: Multiscale solve}\\
    Find $\ums^m\in\voff^m$ such that $a(\ums^m,v)=(f,v)$ for all $v\in\voff^m$\;
    \textbf{Step 2: Surrogate scores}\\
    For every $i=1,\ldots,N$, construct $\mf{z}_{m,i}$ and obtain the posterior-mean score $\tilde{\eta}_{m,i}^2$ from Algorithm \ref{alg:GP_Inference}\;
    Find a permutation $\sigma_m$ such that $\tilde{\eta}_{m,\sigma_m(1)}^2\ge\cdots\ge\tilde{\eta}_{m,\sigma_m(N)}^2$\;
    \textbf{Step 3: Marking}\\
    Find the smallest $k_m$ such that $\theta\sum_{i=1}^N\tilde{\eta}_{m,i}^2\le\sum_{j=1}^{k_m}\tilde{\eta}_{m,\sigma_m(j)}^2$\;
    \textbf{Step 4: Enrichment}\\
    \For{$j=1,\ldots,k_m$}{
        Set $i\leftarrow\sigma_m(j)$ and add $\{\psi_{l_i^m+1}^{\omi},\ldots,\psi_{l_i^m+s}^{\omi}\}$ to the local offline space on $\omi$\;
        Set $l_i^{m+1}\leftarrow l_i^m+s$\;
    }
    For unmarked neighborhoods, set $l_i^{m+1}\leftarrow l_i^m$, and construct $\voff^{m+1}$ from the updated local spaces\;
}
\caption{GP-based offline-adaptive GMsFEM}
\label{alg:GP_GMsFEM}
\end{algorithm}

The superscript ``$2$'' in $\tilde{\eta}_{m,i}^2$ is retained to match the physical indicator convention. Since an unconstrained GP posterior mean for a positive target need not be positive, the marking argument in Section \ref{sec:analysis} explicitly assumes nonnegative surrogate scores. The present numerical pipeline uses posterior-mean predictions for ranking; enforcing positivity through a transformed-output or uncertainty-aware marking rule is a separate methodological extension.

\subsection{GP surrogate models}\label{subsec:GP surrogate models}
The training data are generated from $Q$ coefficient samples. For each sample, the classical indicator is evaluated on all $N$ coarse neighborhoods during $M_{\mathrm{tr}}$ enrichment levels. Thus,
\begin{equation}\label{eq:training set}
\mc{D}=\brcs{(\mf{z}_j,y_j)}_{j=1}^{J},\qquad J=Q M_{\mathrm{tr}}N,
\end{equation}
where $(\mf{z}_j,y_j)$ is one realization of $(Z,Y)$. The training distribution is fixed by this sampling design. Predictions outside the associated coefficient, mesh, source-term, and feature distribution are therefore extrapolations rather than a guaranteed extension of the method.

We place a GP prior on the regression function in (\ref{eq:feature-discrepancy}),
\begin{equation}\label{eq:prior GP}
m\sim\gp\left(\mu(z;\ze),c(z,z';\bbet,\sigma^2)\right),\qquad \mu(z;\ze)=\ze.
\end{equation}
We use the Mat\'ern $3/2$ and Mat\'ern $5/2$ covariance kernels to compare two finite-smoothness priors, with the former representing a rougher prior than the latter \cite{kanagawa_gaussian_2018}:
\begin{equation}\label{eq:Gaussian covariance functions}
\begin{aligned}
c_{3/2}(z,z';\bbet,\sigma^2)&=\sigma^2(1+\sqrt{3}\rho)\exp(-\sqrt{3}\rho),\\
c_{5/2}(z,z';\bbet,\sigma^2)&=\sigma^2\left(1+\sqrt{5}\rho+\frac{5}{3}\rho^2\right)\exp(-\sqrt{5}\rho),
\end{aligned}
\end{equation}
where $\rho=\sqrt{(z-z')^\top\operatorname{diag}(\bbet)(z-z')}$ and $\bbet\in\rr_+^{d_f}$ contains one inverse-length-scale parameter for each feature. The ARD parameterization permits different feature directions to have different fitted correlation scales; it does not by itself prove that an irrelevant feature has no effect.

Write $\mf{Z}=(\mf{z}_1,\ldots,\mf{z}_J)^\top$, $\mf{y}=(y_1,\ldots,y_J)^\top$, $\mf{C}=c(\mf{Z},\mf{Z};\bbet,\sigma^2)$, and $\bm{\mu}=\mu(\mf{Z};\ze)$. The fitted parameters $\Theta^*=(\ze^*,\bbet^*,\sigma^{*2},\delta^{*2})$ are obtained by minimizing the negative log marginal likelihood
\begin{equation}\label{eq:marginal log likelihood}
\Theta^*=\underset{\ze,\bbet,\sigma^2,\delta^2}{\arg\min}\left\{\frac12(\mf{y}-\bm{\mu})^\top(\mf{C}+\delta^2\mf{I}_J)^{-1}(\mf{y}-\bm{\mu})+\frac12\log\det(\mf{C}+\delta^2\mf{I}_J)+\frac{J}{2}\log(2\pi)\right\}.
\end{equation}
The learned nugget $\delta^2$ regularizes the covariance system and can absorb unresolved discrepancy $r$ in (\ref{eq:feature-discrepancy}); it is not interpreted here as independent physical noise corrupting the computed labels.

\subsection{Inference of error indicators}\label{subsec:inference of error indicators}
For a matrix $\hat{\mf{Z}}$ of online feature vectors, the posterior mean is
\begin{equation}\label{eq:posterior expectation}
\widehat{\mf{y}}=\mu(\hat{\mf{Z}};\ze^*)+c(\hat{\mf{Z}},\mf{Z};\bbet^*,\sigma^{*2})(\mf{C}+\delta^{*2}\mf{I}_J)^{-1}(\mf{y}-\bm{\mu}).
\end{equation}
Its $i$-th component is used as $\tilde{\eta}_{m,i}^2$ in Algorithm \ref{alg:GP_GMsFEM}. Once $\Theta^*$ is fitted, the operational predictor is the GP posterior mean, algebraically equivalent to a kernel ridge regression (KRR) estimator: with the convention $J^{-1}\sum_j(y_j-s(z_j))^2+\gamma\norm{s}_{\rkhs}^2$, the corresponding regularization parameter is $\gamma=\delta^{*2}/J$ \cite{rasmussen_gaussian_2005,kanagawa_gaussian_2018}. This equivalence does not require the deterministic simulator outputs to be independent Gaussian measurements.

Only the posterior mean is used by the present marking rule. The posterior variance is not used to change a mark, trigger an exact residual evaluation, or quantify a numerical error bar. Incorporating it in an uncertainty-aware marking strategy is left for future work.

\begin{algorithm}[H]
\SetAlgoLined
\KwIn{Training coefficient set $\{\ka_q\}_{q=1}^Q$, number of sampled enrichment levels $M_{\mathrm{tr}}$, and a Mat\'ern kernel $c$.}
\textbf{Phase 1: Offline data collection}\\
\For{$q=1,\ldots,Q$, $m=1,\ldots,M_{\mathrm{tr}}$, and $i=1,\ldots,N$}{
    Compute the exact label $y=\eta_{m,i}^2=\norm{R_{m,i}}_{V_i^*}^2(\lambda_{l_i^m+1}^{\omega_i})^{-1}$\;
    Construct $\mf{z}=\mf{z}_{m,i}$ from (\ref{eq:vec unified mapping}) and store $(\mf{z},y)$ in $\mc{D}$\;
}
\textbf{Phase 2: Offline model training}\\
Fit $\Theta^*$ using (\ref{eq:marginal log likelihood}) and precompute $\bm{\alpha}=(\mf{C}+\delta^{*2}\mf{I}_J)^{-1}(\mf{y}-\bm{\mu})$\;
\textbf{Phase 3: Online inference}\\
Construct the current feature matrix $\hat{\mf{Z}}$ and calculate $\widehat{\mf{y}}$ using (\ref{eq:posterior expectation})\;
\KwRet{Posterior-mean surrogate scores $\{\tilde{\eta}_{m,i}^2\}_{i=1}^N$.}
\caption{Construction and inference of GP-based surrogate indicators}
\label{alg:GP_Inference}
\end{algorithm}

\subsection{Cost and applicability}\label{subsec:cost-applicability}
The method is designed for a repeated-query regime in which one offline model is reused across sufficiently many admissible online solves. It transfers repeated online residual evaluation to an offline data-generation and training stage. Let $T_{\mathrm{data}}$ denote the cost of generating the $J=QM_{\mathrm{tr}}N$ labeled feature--indicator pairs in (\ref{eq:training set}), including the classical adaptive solves and local residual evaluations used for that purpose. With a dense covariance implementation, each factorization required during GP training costs $\mathcal{O}(J^3)$ operations and $\mathcal{O}(J^2)$ memory. Let $T_{\mathrm{train}}$ denote the total fitting cost, including the repeated factorizations used by hyperparameter optimization.

After fitting, the matrix expression in (\ref{eq:posterior expectation}) evaluates the posterior means for all $N$ neighborhoods in one batched or vectorized operation. With the precomputed weight vector in Algorithm \ref{alg:GP_Inference}, this requires $\mathcal{O}(NJd_f)$ kernel operations, apart from feature construction. Coefficient samples and many local residual evaluations in offline data generation can be parallelized; local feature construction and kernel evaluation likewise admit parallel work across neighborhoods. In contrast, the dense covariance factorization is a centralized bottleneck. Sparse or structured GP approximations may be needed when $J$ is substantially larger than in the present experiments \cite{quinonero-candela_unifying_2005,wu_variational_2022}.

For repeated online solves under the same training design, a symbolic break-even count is
\begin{equation}\label{eq:break-even}
n_{\mathrm{BE}}=\frac{T_{\mathrm{data}}+T_{\mathrm{train}}}{T_{H^{-1},\mathrm{online}}-T_{\mathrm{GP},\mathrm{online}}},
\end{equation}
provided that $T_{H^{-1},\mathrm{online}}>T_{\mathrm{GP},\mathrm{online}}$. The timing data reported in Section \ref{sec:Numerical experiments} measure the online indicator-evaluation component only; they do not provide all terms needed to estimate $n_{\mathrm{BE}}$. Thus, the existing results support the online component comparison, whereas an end-to-end break-even count requires the unreported offline terms. Reuse is intended for in-distribution queries under the same design; a new source term, mesh, local spectral construction, or coefficient distribution can change the feature--indicator relation and generally requires retraining.

\section{Analysis}\label{sec:analysis}
This section separates the established GMsFEM estimator from the additional approximation introduced by the surrogate. For any actual offline space $\voff^m$ generated by Algorithm \ref{alg:GP_GMsFEM}, the classical a posteriori estimate of Section \ref{sec:hminus1 adaptive GMsFEM} applies to the exact residual indicators of that space:
\begin{equation}\label{eq:hminus1 solution bound}
\norm{u-\ums^m}_V^2\le\cerr\sum_{i=1}^N\eta_{m,i}^2.
\end{equation}
In particular, the right-hand side contains $\eta_{m,i}^2$, not its GP prediction. The surrogate influences which neighborhoods are enriched; it does not alter the estimator itself.

\subsection{A nonuniform perturbed D\"orfler marking statement}\label{subsec:perturbed-marking}
Let $S_m=\{\sigma_m(1),\ldots,\sigma_m(k_m)\}$ be the set marked by Algorithm \ref{alg:GP_GMsFEM}. Pointwise score bounds may vary across neighborhoods, so the following result retains this nonuniform information.

\begin{theorem}[Nonuniform perturbed D\"orfler marking]\label{thm:perturbed-dorfler}
Assume that the predicted scores are nonnegative and that, for numbers $\varepsilon_{m,i}\ge0$,
\begin{equation}\label{eq:pointwise-surrogate-error}
\left|\tilde{\eta}_{m,i}^2-\eta_{m,i}^2\right|\le\varepsilon_{m,i},
\qquad i=1,\ldots,N.
\end{equation}
If $S_m$ satisfies the predicted D\"orfler condition
\begin{equation}\label{eq:predicted-dorfler}
\sum_{i\in S_m}\tilde{\eta}_{m,i}^2\ge\theta\sum_{i=1}^N\tilde{\eta}_{m,i}^2,
\end{equation}
then it satisfies the exact-indicator estimate
\begin{equation}\label{eq:perturbed-dorfler-nonuniform}
\sum_{i\in S_m}\eta_{m,i}^2
\ge
\theta\sum_{i=1}^N\eta_{m,i}^2
-\theta\sum_{i=1}^N\varepsilon_{m,i}
-\sum_{i\in S_m}\varepsilon_{m,i}.
\end{equation}
\end{theorem}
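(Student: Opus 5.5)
The argument is a direct chain of elementary inequalities. It starts from the predicted D\"orfler condition (\ref{eq:predicted-dorfler}) and converts each occurrence of a predicted score into an exact indicator using the pointwise bound (\ref{eq:pointwise-surrogate-error}). The conversion goes in opposite directions on the two sides.

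On the left-hand side, restricted to $S_m$, I will use the upper bound $\tilde{\eta}_{m,i}^2\le\eta_{m,i}^2+\varepsilon_{m,i}$. This gives
\[
\sum_{i\in S_m}\eta_{m,i}^2+\sum_{i\in S_m}\varepsilon_{m,i}\ \ge\ \sum_{i\in S_m}\tilde{\eta}_{m,i}^2 .
\]
On the right-hand side, summed over all $i$, I will use the lower bound $\tilde{\eta}_{m,i}^2\ge\eta_{m,i}^2-\varepsilon_{m,i}$ and multiply by $\theta>0$. This gives
\[
\theta\sum_{i=1}^N\tilde{\eta}_{m,i}^2\ \ge\ \theta\sum_{i=1}^N\eta_{m,i}^2-\theta\sum_{i=1}^N\varepsilon_{m,i}.
\]

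Chaining these two displays through (\ref{eq:predicted-dorfler}) and moving $\sum_{i\in S_m}\varepsilon_{m,i}$ to the right yields (\ref{eq:perturbed-dorfler-nonuniform}) exactly.

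The only points to check are the direction of each inequality and the sign of $\theta$; the argument has no real obstacle. The nonnegativity assumption on the predicted scores is not needed for the algebra itself. Its role is to make the predicted D\"orfler condition meaningful, so that the marking in Algorithm \ref{alg:GP_GMsFEM} is well posed and a set $S_m$ satisfying (\ref{eq:predicted-dorfler}) with the smallest $k_m$ exists. I would note this in a remark rather than use it in the proof.
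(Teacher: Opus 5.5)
Your proposal is correct and follows essentially the same route as the paper: bound the marked sum from above by $\eta_{m,i}^2+\varepsilon_{m,i}$, pass through the predicted D\"orfler condition, and bound the full predicted sum from below by $\eta_{m,i}^2-\varepsilon_{m,i}$, using $\theta>0$. You are also right that the nonnegativity assumption is not used in the algebra; the paper's proof does not use it either.
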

\begin{proof}
By (\ref{eq:pointwise-surrogate-error}) and (\ref{eq:predicted-dorfler}),
\begin{align*}
\sum_{i\in S_m}\eta_{m,i}^2
&\ge\sum_{i\in S_m}\tilde{\eta}_{m,i}^2-\sum_{i\in S_m}\varepsilon_{m,i}\\
&\ge\theta\sum_{i=1}^N\tilde{\eta}_{m,i}^2-\sum_{i\in S_m}\varepsilon_{m,i}\\
&\ge\theta\sum_{i=1}^N\eta_{m,i}^2
-\theta\sum_{i=1}^N\varepsilon_{m,i}
-\sum_{i\in S_m}\varepsilon_{m,i},
\end{align*}
which proves the assertion.
\end{proof}

If $\varepsilon_{m,i}\le\varepsilon_m$ for every $i$, then (\ref{eq:perturbed-dorfler-nonuniform}) recovers
\begin{equation}\label{eq:perturbed-dorfler}
\sum_{i\in S_m}\eta_{m,i}^2
\ge
\theta\sum_{i=1}^N\eta_{m,i}^2-(\theta N+|S_m|)\varepsilon_m.
\end{equation}
More generally, if $0<\rho<1$ and
\begin{equation}\label{eq:relative-surrogate-error}
\theta\sum_{i=1}^N\varepsilon_{m,i}+\sum_{i\in S_m}\varepsilon_{m,i}
\le
\rho\theta\sum_{i=1}^N\eta_{m,i}^2,
\end{equation}
then (\ref{eq:perturbed-dorfler-nonuniform}) yields
\begin{equation}\label{eq:reduced-dorfler}
\sum_{i\in S_m}\eta_{m,i}^2
\ge (1-\rho)\theta\sum_{i=1}^N\eta_{m,i}^2.
\end{equation}
Subject to the assumptions of the corresponding residual-adaptive GMsFEM convergence theorem, this gives the appropriate conditional connection to classical marking theory. The next subsection gives one conditional source of the pointwise quantities $\varepsilon_{m,i}$ in (\ref{eq:pointwise-surrogate-error}).

\subsection{Conditional KRR control of surrogate marking}\label{subsec:conditional-krr-bound}
The GP--KRR equivalence is an identity between the posterior-mean predictor and a KRR estimator; it does not by itself verify a prediction-error theorem. One possible route to the bounds in (\ref{eq:pointwise-surrogate-error}) is the deterministic KRR analysis of \cite{maddalena_deterministic_2021}. To apply that result without treating the computed labels as noisy physical measurements, we regard the realized discrepancy $r$ in (\ref{eq:feature-discrepancy}) as a deterministic approximation error.

\begin{assumption}[Conditional KRR setting]\label{assump:conditional-krr-setting}
After subtracting the fitted constant mean $\ze^*$, suppose that the following conditions hold. They are sufficient conditions for the argument below, not properties established by the current implementation or experiments.
\begin{enumerate}
    \item The feature domain $\mc{Z}\subset\rr^{d_f}$ is compact, and the fitted kernel $c$ is continuous and positive definite on $\mc{Z}\times\mc{Z}$.
    \item The training sites $\mf{z}_1,\ldots,\mf{z}_J$ are pairwise distinct, so the fitted kernel matrix $\mf{C}=c(\mf{Z},\mf{Z})$ is invertible.
    \item The fitted kernel, mean, and regularization parameters are held fixed, with $\delta^{*2}>0$ and $\gamma=\delta^{*2}/J$ under the KRR convention in Section \ref{subsec:inference of error indicators}.
    \item The centered target $f_\star:=m_\star-\ze^*$ belongs to $\rkhs$ and has an available, non-circular bound $\norm{f_\star}_{\rkhs}\le\Gamma$.
    \item With $\mf{y}_{\mathrm{c}}:=\mf{y}-\ze^*\mf{1}_J=f_\star(\mf{Z})+\mf{r}$, the realized training discrepancies satisfy $|\mf{r}|\le\bar{\mf{r}}$ for a known vector $\bar{\mf{r}}\in\rr_{>0}^J$, where both inequalities and absolute values are componentwise.
    \item For each online state under consideration, $|r_{m,i}|\le\bar r_{m,i}^{\mathrm{on}}$ for a specified $\bar r_{m,i}^{\mathrm{on}}\ge0$.
\end{enumerate}
\end{assumption}

For $z\in\mc{Z}$, set $\mf{c}_{\mf{Z}}(z):=c(\mf{Z},z)$. The fitted posterior-mean/KRR predictor is
\begin{equation}\label{eq:conditional-krr-predictor}
s^\star(z):=\ze^*+\mf{y}_{\mathrm{c}}^\top
(\mf{C}+\delta^{*2}\mf{I}_J)^{-1}\mf{c}_{\mf{Z}}(z).
\end{equation}
Define the kernel power function, the centered-data interpolant, and the bounded-discrepancy quantity by
\begin{align}
P_J(z)&:=\left(c(z,z)-\mf{c}_{\mf{Z}}(z)^\top\mf{C}^{-1}\mf{c}_{\mf{Z}}(z)\right)^{1/2},\label{eq:krr-power-function}\\
s_{\mathrm{int}}(z)&:=\mf{y}_{\mathrm{c}}^\top\mf{C}^{-1}\mf{c}_{\mf{Z}}(z),
\qquad
\norm{s_{\mathrm{int}}}_{\rkhs}^2=\mf{y}_{\mathrm{c}}^\top\mf{C}^{-1}\mf{y}_{\mathrm{c}},\label{eq:centered-interpolant}\\
\Delta_J&:=\max_{|\mf{q}|\le\bar{\mf{r}}}
\left(-\mf{q}^\top\mf{C}^{-1}\mf{q}
+2\mf{y}_{\mathrm{c}}^\top\mf{C}^{-1}\mf{q}\right).
\label{eq:krr-discrepancy-delta}
\end{align}
Also let
\begin{equation}\label{eq:krr-q-matrix}
\mf{Q}_J:=\mf{C}+(J\gamma)^{-1}\mf{C}^2
=\mf{C}+(\delta^{*2})^{-1}\mf{C}^2
\end{equation}
and define the fully specified pointwise quantity
\begin{align}
B_J(z):={}&P_J(z)
\left(\Gamma^2+\Delta_J-\mf{y}_{\mathrm{c}}^\top\mf{C}^{-1}\mf{y}_{\mathrm{c}}\right)^{1/2}
+\bar{\mf{r}}^\top\left|\mf{C}^{-1}\mf{c}_{\mf{Z}}(z)\right|\notag\\
&+\left|\mf{y}_{\mathrm{c}}^\top\mf{Q}_J^{-1}\mf{c}_{\mf{Z}}(z)\right|.
\label{eq:defined-krr-bound}
\end{align}
Since $s^\star-\ze^*$ is the centered KRR predictor with regularization $\gamma$, Theorem 1, equation (14), of \cite{maddalena_deterministic_2021}, applied to $f_\star$, gives
\begin{equation}\label{eq:conditional-krr-bound}
\left|s^\star(z)-m_\star(z)\right|\le B_J(z),
\end{equation}
where $s^\star$ is the posterior-mean/KRR predictor. The vector $\bar{\mf{r}}$ is an additional deterministic discrepancy bound used only in this conditional analysis. It is distinct from the learned nugget $\delta^{*2}$, which fixes the KRR regularization through $J\gamma=\delta^{*2}$.

\begin{proposition}[Conditional pointwise indicator-score bound]\label{prop:conditional-indicator-score-bound}
Suppose Assumption \ref{assump:conditional-krr-setting} holds. Then every online neighborhood under consideration satisfies
\begin{equation}\label{eq:krr-to-score-bound}
\left|\tilde{\eta}_{m,i}^2-\eta_{m,i}^2\right|
\le \varepsilon_{m,i}^{\mathrm{KRR}},
\qquad
\varepsilon_{m,i}^{\mathrm{KRR}}
:=B_J(\mf{z}_{m,i})+\bar r_{m,i}^{\mathrm{on}}.
\end{equation}
\end{proposition}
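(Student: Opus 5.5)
The plan is a triangle inequality that combines the KRR bound (\ref{eq:conditional-krr-bound}) with the online discrepancy bound in item 6 of Assumption \ref{assump:conditional-krr-setting}. Fix $m$ and $i$, and write $\mf{z}=\mf{z}_{m,i}$. The first step is to identify the two quantities in (\ref{eq:krr-to-score-bound}) with the objects of the analysis.

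For the surrogate score, the posterior mean (\ref{eq:posterior expectation}) with constant mean $\ze^*$ is evaluated at $\mf{z}$. Its $i$-th component is
\[
\ze^*+c(\mf{z},\mf{Z})(\mf{C}+\delta^{*2}\mf{I}_J)^{-1}(\mf{y}-\ze^*\mf{1}_J).
\]
Since $\mf{C}+\delta^{*2}\mf{I}_J$ is symmetric, this equals $s^\star(\mf{z})$ in (\ref{eq:conditional-krr-predictor}), so $\tilde{\eta}_{m,i}^2=s^\star(\mf{z}_{m,i})$.

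For the exact indicator, the decomposition (\ref{eq:feature-discrepancy}) applied to the online state gives $\eta_{m,i}^2=Y_{m,i}=m_\star(\mf{z}_{m,i})+r_{m,i}$. Here $r_{m,i}$ is the realized online discrepancy. I also need $\mf{z}_{m,i}\in\mc{Z}$ so that (\ref{eq:conditional-krr-bound}) applies. I will take this as part of ``online neighborhood under consideration'' in item 1 of the assumption, and state it explicitly.

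The main step is then the estimate
\[
|\tilde{\eta}_{m,i}^2-\eta_{m,i}^2|\le|s^\star(\mf{z}_{m,i})-m_\star(\mf{z}_{m,i})|+|r_{m,i}|\le B_J(\mf{z}_{m,i})+\bar r^{\mathrm{on}}_{m,i}.
\]
The first term is controlled by (\ref{eq:conditional-krr-bound}) and the second by item 6 of the assumption.

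The only substantive obstacle is justifying (\ref{eq:conditional-krr-bound}) itself, which the paper imports from \cite{maddalena_deterministic_2021}. For this I would check that its hypotheses match items 1--5 of the assumption:
\begin{itemize}
\item[(a)] The kernel is continuous and positive definite on a compact set, with distinct sites, so $\mf{C}^{-1}$ and $P_J$ are well defined.
\item[(b)] The target $f_\star=m_\star-\ze^*$ lies in $\rkhs$ with norm bound $\Gamma$.
\item[(c)] The data are $\mf{y}_{\mathrm{c}}=f_\star(\mf{Z})+\mf{r}$ with componentwise bounded noise $|\mf{r}|\le\bar{\mf{r}}$.
\item[(d)] The regularization matches the convention $J\gamma=\delta^{*2}$, so that $\mf{Q}_J$ in (\ref{eq:krr-q-matrix}) is the correct matrix.
\end{itemize}
Finally, since $s^\star-m_\star=(s^\star-\ze^*)-f_\star$, the cited bound for the centered KRR predictor transfers directly to (\ref{eq:conditional-krr-bound}).

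Once these checks are in place, the proposition follows from the two-term split above, with no further estimates required.
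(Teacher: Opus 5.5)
Your proposal is correct and follows the paper's proof: you identify $\tilde{\eta}_{m,i}^2=s^\star(\mf{z}_{m,i})$ and $\eta_{m,i}^2=m_\star(\mf{z}_{m,i})+r_{m,i}$, split the difference with the triangle inequality, and bound the two terms by (\ref{eq:conditional-krr-bound}) and item 6 of Assumption \ref{assump:conditional-krr-setting}. Your additional checks are useful refinements that do not change the argument: the symmetry step matching the posterior mean to $s^\star$, the requirement $\mf{z}_{m,i}\in\mc{Z}$, and the hypothesis matching for the cited KRR bound, all of which the paper leaves implicit.
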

\begin{proof}
For the online state, $\eta_{m,i}^2=Y_{m,i}=m_\star(\mf{z}_{m,i})+r_{m,i}$ and $\tilde{\eta}_{m,i}^2=s^\star(\mf{z}_{m,i})$. Therefore,
\begin{align*}
\left|\tilde{\eta}_{m,i}^2-\eta_{m,i}^2\right|
&=\left|s^\star(\mf{z}_{m,i})-m_\star(\mf{z}_{m,i})-r_{m,i}\right|\\
&\le\left|s^\star(\mf{z}_{m,i})-m_\star(\mf{z}_{m,i})\right|+|r_{m,i}|\\
&\le B_J(\mf{z}_{m,i})+\bar r_{m,i}^{\mathrm{on}},
\end{align*}
where the last line uses (\ref{eq:conditional-krr-bound}) and Assumption \ref{assump:conditional-krr-setting}.
\end{proof}

\begin{corollary}[Conditional KRR-controlled D\"orfler marking]\label{cor:krr-controlled-dorfler}
Under Assumption \ref{assump:conditional-krr-setting}, suppose that the surrogate scores are nonnegative and that $S_m$ satisfies the predicted D\"orfler condition (\ref{eq:predicted-dorfler}). Then
\begin{equation}\label{eq:krr-controlled-dorfler}
\sum_{i\in S_m}\eta_{m,i}^2
\ge
\theta\sum_{i=1}^N\eta_{m,i}^2
-\theta\sum_{i=1}^N\varepsilon_{m,i}^{\mathrm{KRR}}
-\sum_{i\in S_m}\varepsilon_{m,i}^{\mathrm{KRR}}.
\end{equation}
If, in addition, there is a $0<\rho<1$ such that
\begin{equation}\label{eq:relative-krr-score-error}
\theta\sum_{i=1}^N\varepsilon_{m,i}^{\mathrm{KRR}}
+\sum_{i\in S_m}\varepsilon_{m,i}^{\mathrm{KRR}}
\le\rho\theta\sum_{i=1}^N\eta_{m,i}^2,
\end{equation}
then $S_m$ satisfies the exact D\"orfler condition (\ref{eq:reduced-dorfler}) with reduced parameter $(1-\rho)\theta$.
\end{corollary}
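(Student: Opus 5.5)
The corollary follows by composing Proposition \ref{prop:conditional-indicator-score-bound} with Theorem \ref{thm:perturbed-dorfler}; no new analysis is needed.

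First, we invoke Proposition \ref{prop:conditional-indicator-score-bound} under Assumption \ref{assump:conditional-krr-setting}. It yields the pointwise bound (\ref{eq:krr-to-score-bound}) for every online neighborhood $i=1,\ldots,N$ at level $m$. This is precisely hypothesis (\ref{eq:pointwise-surrogate-error}) with the choice $\varepsilon_{m,i}:=\varepsilon_{m,i}^{\mathrm{KRR}}$. These numbers are nonnegative, since $B_J(z)\ge0$ and $\bar r_{m,i}^{\mathrm{on}}\ge0$. The nonnegativity of $B_J$ deserves one line of justification, because each of its terms must be well defined and real.
\begin{itemize}
\item $P_J(z)\ge0$ is a power function: $c(z,z)-\mf{c}_{\mf{Z}}(z)^\top\mf{C}^{-1}\mf{c}_{\mf{Z}}(z)$ is a Schur complement of a positive semidefinite kernel matrix, so it is nonnegative.
\item The bracket $\Gamma^2+\Delta_J-\mf{y}_{\mathrm{c}}^\top\mf{C}^{-1}\mf{y}_{\mathrm{c}}$ must be nonnegative for (\ref{eq:conditional-krr-bound}) to be meaningful. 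This is part of what the cited bound from \cite{maddalena_deterministic_2021} provides: there, $\Gamma^2+\Delta_J$ dominates the RKHS norm of the noiseless interpolant. We treat this as implicit in (\ref{eq:conditional-krr-bound}).
\item The remaining two terms of $B_J$ are absolute values, hence nonnegative.
\end{itemize}

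Second, since the surrogate scores are assumed nonnegative and $S_m$ satisfies (\ref{eq:predicted-dorfler}), all hypotheses of Theorem \ref{thm:perturbed-dorfler} hold with $\varepsilon_{m,i}=\varepsilon_{m,i}^{\mathrm{KRR}}$. Substituting into (\ref{eq:perturbed-dorfler-nonuniform}) gives (\ref{eq:krr-controlled-dorfler}) verbatim.

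Third, for the reduced-parameter statement, we assume (\ref{eq:relative-krr-score-error}), which is (\ref{eq:relative-surrogate-error}) for this choice of $\varepsilon_{m,i}$. Inserting it into (\ref{eq:krr-controlled-dorfler}) gives
\[
\sum_{i\in S_m}\eta_{m,i}^2\ge\theta\sum_{i=1}^N\eta_{m,i}^2-\rho\theta\sum_{i=1}^N\eta_{m,i}^2=(1-\rho)\theta\sum_{i=1}^N\eta_{m,i}^2,
\]
which is (\ref{eq:reduced-dorfler}). Since $0<\rho<1$, the parameter $(1-\rho)\theta$ lies in $(0,\theta)$, so this is a genuine Dörfler condition.

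The only point requiring care is the first step: checking that the quantities from Proposition \ref{prop:conditional-indicator-score-bound} are admissible, i.e.\ real and nonnegative. That reduces to the well-definedness of $B_J$, which is inherited from the cited KRR bound. Everything after that is direct substitution.
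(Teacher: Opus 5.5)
Your proof is correct and matches the paper's: Proposition \ref{prop:conditional-indicator-score-bound} supplies hypothesis (\ref{eq:pointwise-surrogate-error}) with $\varepsilon_{m,i}=\varepsilon_{m,i}^{\mathrm{KRR}}$, Theorem \ref{thm:perturbed-dorfler} then gives (\ref{eq:krr-controlled-dorfler}), and inserting (\ref{eq:relative-krr-score-error}) yields (\ref{eq:reduced-dorfler}). Your extra nonnegativity check is harmless but largely redundant, since the bound $|\tilde{\eta}_{m,i}^2-\eta_{m,i}^2|\le\varepsilon_{m,i}^{\mathrm{KRR}}$ already forces $\varepsilon_{m,i}^{\mathrm{KRR}}\ge0$; moreover, the realness of the square root in $B_J$ follows directly from items 4--5 of Assumption \ref{assump:conditional-krr-setting} rather than from the citation, because taking $\mf{q}=\mf{r}$ in $\Delta_J$ gives $\Gamma^2+\Delta_J-\mf{y}_{\mathrm{c}}^\top\mf{C}^{-1}\mf{y}_{\mathrm{c}}\ge\Gamma^2-f_\star(\mf{Z})^\top\mf{C}^{-1}f_\star(\mf{Z})\ge0$ by the minimum-norm property of the interpolant.
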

\begin{proof}
Proposition \ref{prop:conditional-indicator-score-bound} verifies the pointwise hypothesis of Theorem \ref{thm:perturbed-dorfler} with $\varepsilon_{m,i}=\varepsilon_{m,i}^{\mathrm{KRR}}$, so (\ref{eq:krr-controlled-dorfler}) follows from (\ref{eq:perturbed-dorfler-nonuniform}). Combining (\ref{eq:krr-controlled-dorfler}) with (\ref{eq:relative-krr-score-error}) gives (\ref{eq:reduced-dorfler}).
\end{proof}

\subsection{Scope and verification status}\label{subsec:surrogate-scope}
\begin{remark}[Verification status]\label{rem:krr-verification-status}
Theorem \ref{thm:perturbed-dorfler} is unconditional once the deterministic score bounds (\ref{eq:pointwise-surrogate-error}) hold. In contrast, Assumption \ref{assump:conditional-krr-setting} is not verified by the present experiments: they do not establish RKHS membership, a non-circular norm bound $\Gamma$, training or online discrepancy bounds, pairwise distinct or well-conditioned feature sites, or the applicability of a fixed-parameter analysis after data-dependent hyperparameter fitting. Proposition \ref{prop:conditional-indicator-score-bound} and Corollary \ref{cor:krr-controlled-dorfler} therefore identify a sufficient pathway and do not certify the current fitted surrogate.
\end{remark}

\begin{remark}[Feature-space norm and structured-sampling scope]\label{rem:norm-sampling-scope}
The Mat\'ern GP result reviewed in \cite{kanagawa_gaussian_2018} assumes, among other conditions, i.i.d. feature--label pairs, an input distribution on a normalized domain with density bounded away from zero and infinity, an additive Gaussian likelihood, explicit H\"older/Sobolev regularity of the regression function, and a specified relation between Mat\'ern smoothness and target regularity. Its regression error is measured in the feature-space norm $L^2(P_Z)$, not in the PDE energy norm $\norm{\cdot}_V$; related KRR rates additionally require a prescribed regularization schedule. The samples in (\ref{eq:training set}) are structured across coefficient realizations, enrichment levels, and coarse neighborhoods and have not been shown to satisfy the cited i.i.d. or density assumptions.
\end{remark}

Accordingly, we claim neither distribution-free generalization nor a universal asymptotic rate solely as a function of $J$. The required regularity and coverage may weaken when the coefficient field leaves the admissible distribution, when local eigenvalues cross or change ordering, or when the compressed features omit relevant local residual information. The numerical experiments provide in-distribution empirical evidence for the stated training and test design; they do not provide a KRR certification of the fitted model or a direct solution-error estimator based on predicted indicators.

\section{Numerical experiments}\label{sec:Numerical experiments}

In this section, we report a controlled held-out in-distribution validation of the surrogate-assisted marking mechanism against classical $\hmo$-residual indicators under the fixed coefficient-generation, source, and mesh design described below. The domain is $D=(0,1)^2$ and the source term is
\begin{equation*}
    f(x)=\exp\left(-\frac{\norm{x-\mr{cen}_1}_2^2}{2\times0.03^2}\right)+\exp\left(-\frac{\norm{x-\mr{cen}_2}_2^2}{2\times0.03^2}\right),
\end{equation*}
where $\mr{cen}_1=[0.15,0.15]^\top$ and $\mr{cen}_2=[0.85,0.85]^\top$. The norm is Euclidean. We use a $10\times10$ coarse grid, with each coarse block divided into $10\times10$ fine-grid blocks.

\begin{figure}[H]
    \centering
    \includegraphics[width=0.7\linewidth]{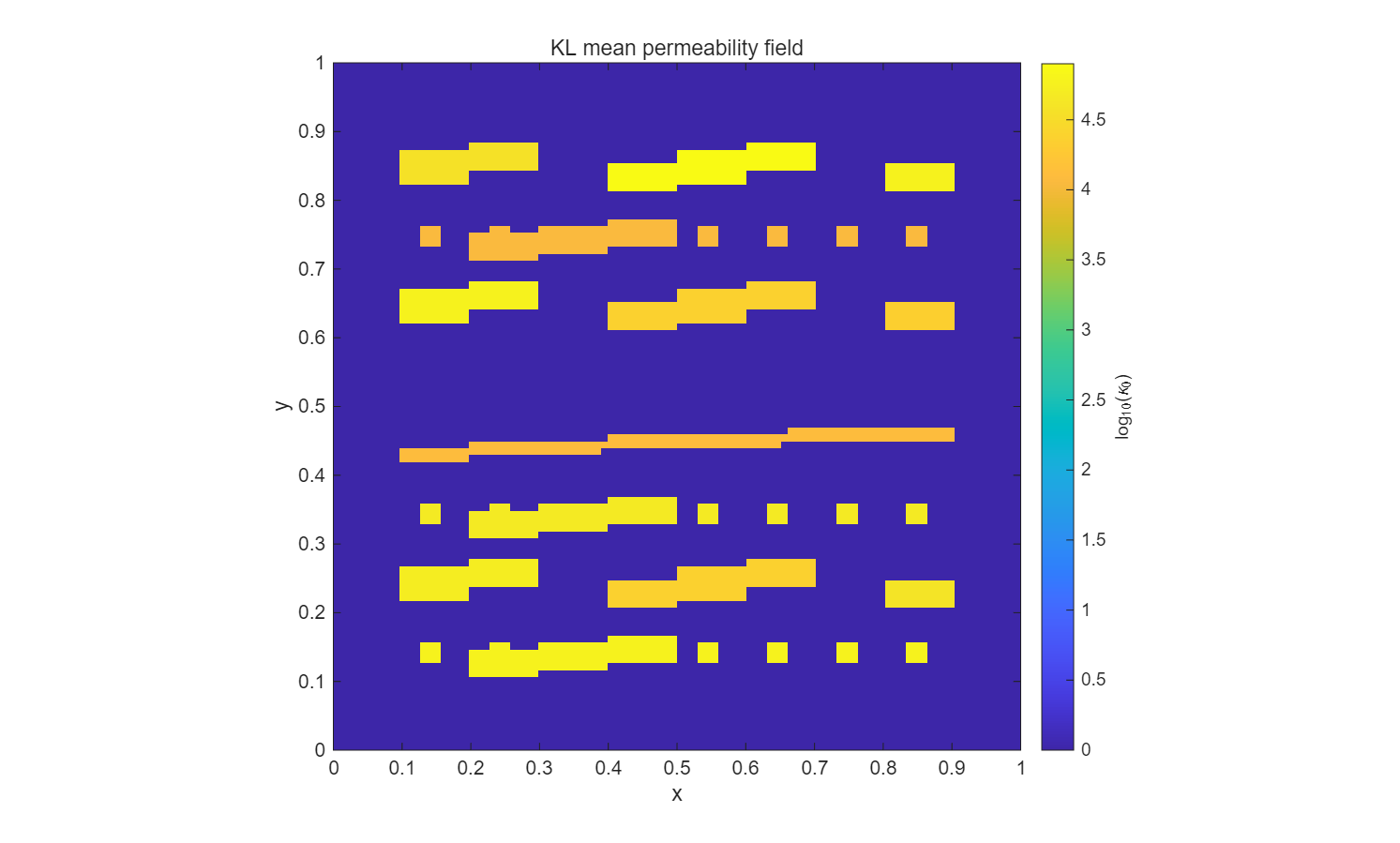}
    \caption{KL mean permeability field $\ka_0(x)$}
    \label{fig:Original Kappa Field}
\end{figure}

As is mentioned in Section \ref{subsec:GP surrogate models}, our aim is to use the trained GP model to predict the corresponding error indicators under different coefficients $\ka_q(x)$. To obtain various coefficient samples, the Karhunen-Lo\`eve (KL) expansion is applied to a given permeability field, as shown in Figure \ref{fig:Original Kappa Field}. This KL expansion method is briefly outlined here. For more details, please refer to \cite{wang_locally_2023}. Denote the permeability field shown in Figure \ref{fig:Original Kappa Field} by $\ka_0(x)$. Assume that the logarithmic permeability field $Y(x,\omega)=\log(\ka(x,\omega))$ is a second-order stationary Gaussian random field. To obtain the KL expansion of this random field, we define a covariance function
\begin{equation*}
    c_{\mr{KL}}(x,r)=\sigma_{\mr{KL}}^2\exp\left(-\frac{\norm{x-r}_2^2}{2\xi^2} \right),
\end{equation*}
where $x$ and $r$ are two arbitrary spatial variables in the domain $\Omega$. We present the eigenvalues and corresponding eigenfunctions of $c_{\mr{KL}}(x,r)$ as $(\lam_i,f_i),\;i=1,2,\ldots,+\infty$. The KL expansion of $Y(x,\omega)$ is written as
\begin{equation*}
    Y(x,\omega)\approx \mb{E}[Y(x,\omega)]+\sum_{i=1}^{N_k}\nu_i\sqrt{\lam_i}f_i,
\end{equation*}
where $\nu_i$ are standard identically independent Gaussian random variables. The permeability field $\ka_0(x)$ serves as the mean field $\mb{E}[Y(x,\omega)]$. The coefficient samples are transformed back by
\begin{equation*}
    \ka_q(x)=\exp(Y(x,\omega_q)).
\end{equation*}
In our experiment, we take $N_k=100$ and $\sigma_{\mr{KL}}=1$. For the spatial correlation length $\xi$, we use $\xi=\frac{1}{4},\frac{1}{8}$ to test the performance of the GP model, since the spatial variation and randomness of different coefficient samples are determined by $\xi$.

For each value of $\xi$, we generate 16 coefficient samples $\ka_q(x)$. The first 6 samples form the training set and the remaining 10 form the held-out test set; each sample is run for 20 enrichment iterations. This split defines the controlled in-distribution scope of the figures below: held-out coefficient realizations from the specified KL-generator family at each correlation length.

For the fitted GP models and held-out coefficient samples, Figure \ref{fig:full_figure} compares the mean relative energy-norm errors of classical $H^{-1}$-residual adaptive GMsFEM and the surrogate-guided method at each reported approximation dimension. The figure is intended to assess whether the surrogate-driven ranking yields an error-decay trend comparable with the classical indicator in this fixed design.

Within the four configurations shown in Figure \ref{fig:full_figure}, the Mat\'ern $3/2$ curves give the more favorable error-decay behavior relative to the Mat\'ern $5/2$ curves. One plausible explanation is that the lower prior smoothness of Mat\'ern $3/2$ better accommodates localized variation in the feature--indicator relation. This is an interpretation compatible with the reported curves, not a causal identification or a universal kernel preference. For both $\xi=1/4$ and $\xi=1/8$, the surrogate tracks the classical error-decay trend on the tested held-out samples; robustness beyond these settings is not evaluated here.

Table \ref{tab:Speedup} reports the average online indicator-evaluation speedup for the four experimental configurations. The reported ratio is
\begin{equation*}
    S=\frac{T_{H^{-1}}}{T_{\mr{GP}}},
\end{equation*}
where $T_{H^{-1}}$ is the measured time for direct classical $H^{-1}$-indicator evaluation and $T_{\mr{GP}}$ is the measured time for feature construction and GP posterior-mean prediction. These timings exclude offline data generation and GP hyperparameter training; the full amortized cost is discussed in Section \ref{subsec:cost-applicability}.

For these held-out samples, the reported online ratios range from 2.0 to 2.1. The averages are taken over the test coefficient samples for each configuration. They show that, in the reported setting, posterior-mean scores can be evaluated faster than direct local dual norms while retaining the error-decay comparison shown in Figure \ref{fig:full_figure}. The full offline and amortized cost boundary is stated once in Section \ref{subsec:cost-applicability}; the table reports the online component only.

\begin{figure}[H]
    \centering
    \begin{subfigure}{0.48\textwidth}
        \centering
        \includegraphics[width=\linewidth]{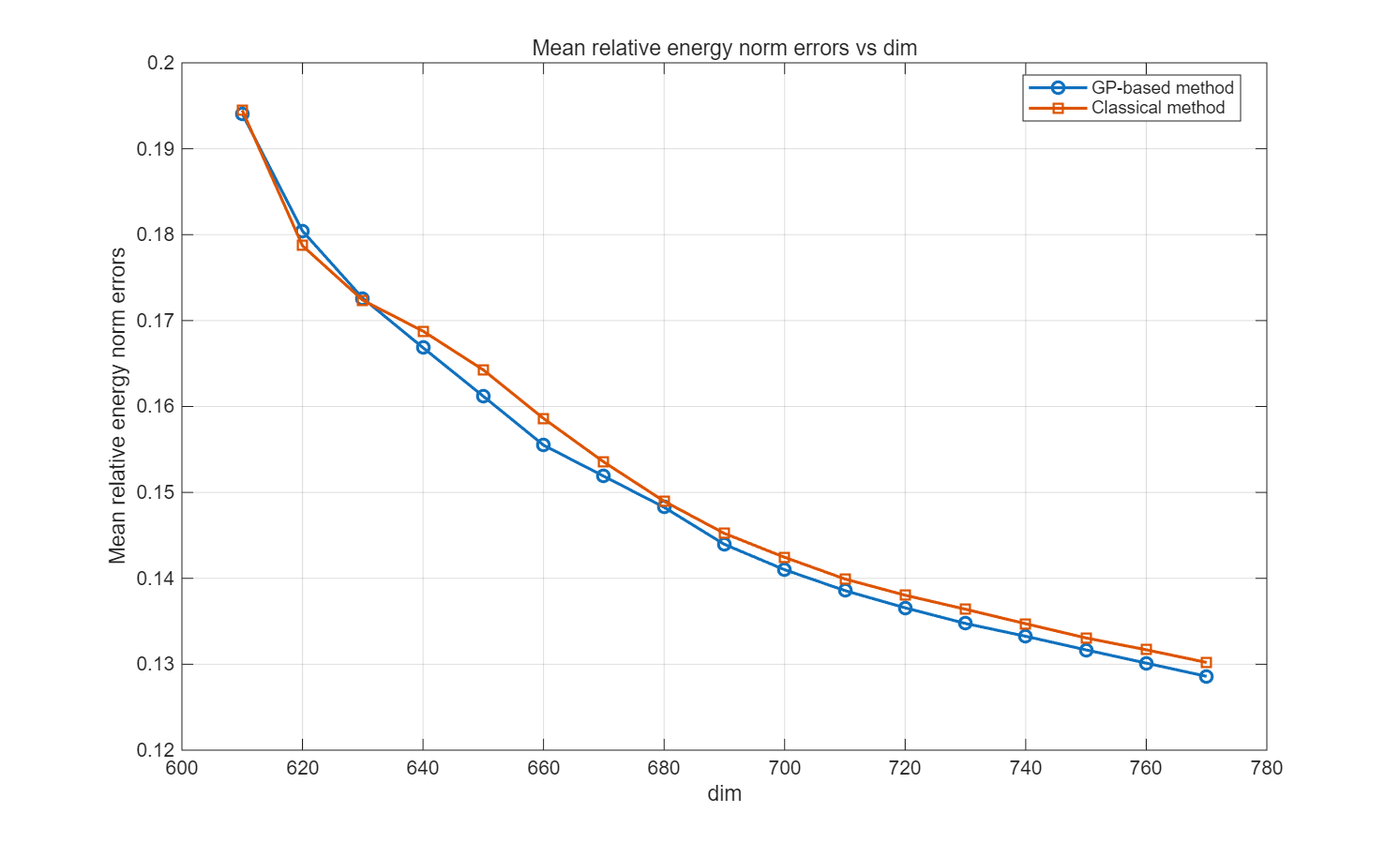}
        \caption{$\xi=\frac{1}{4}$,\;Mat\'ern 3/2}
        \label{subfig:1/4,3/2}
    \end{subfigure}
    \hfill 
    \begin{subfigure}{0.48\textwidth}
        \centering
        \includegraphics[width=\linewidth]{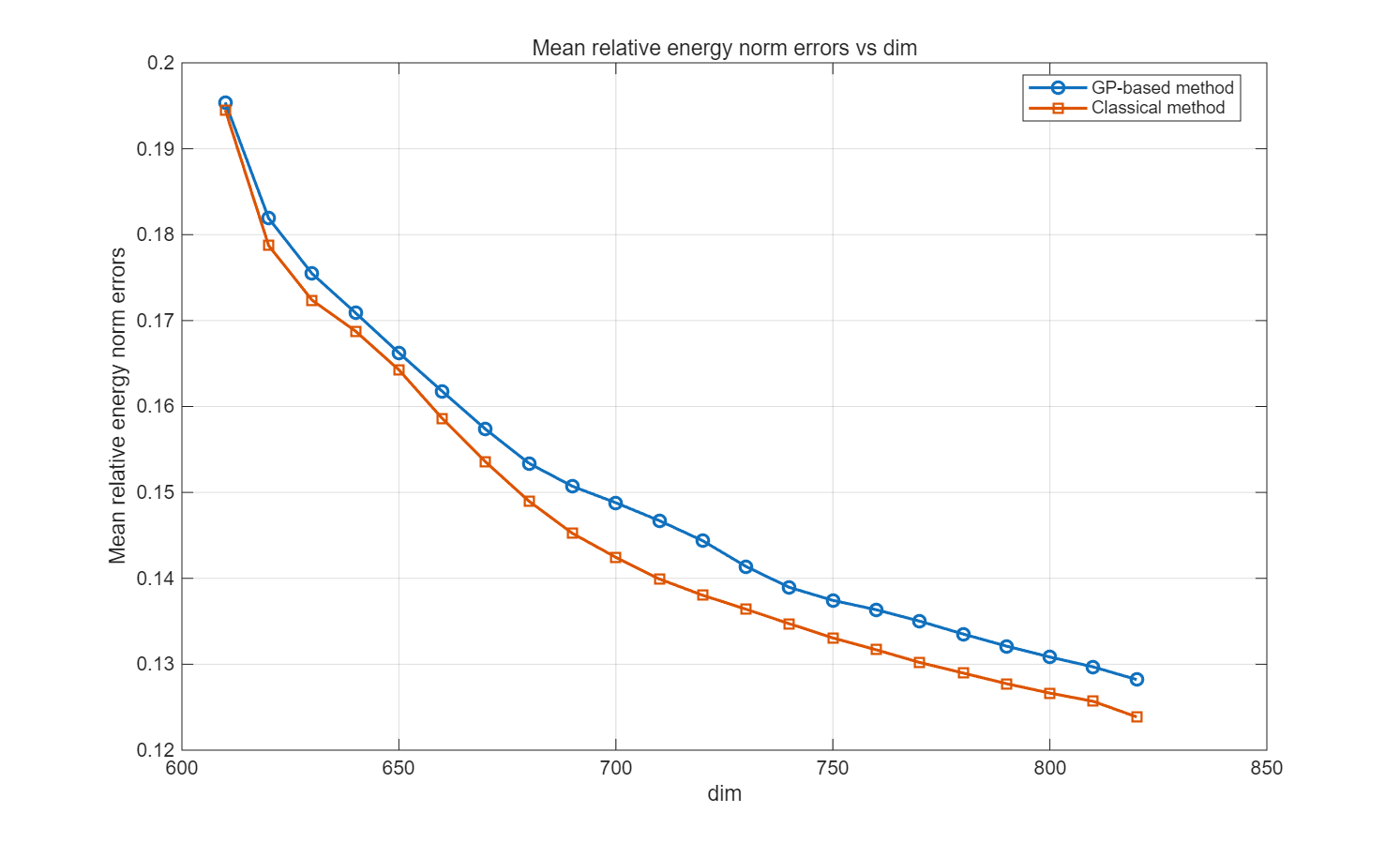}
        \caption{$\xi=\frac{1}{4}$,\;Mat\'ern 5/2}
        \label{subfig:1/4,5/2}
    \end{subfigure}

    \vspace{2ex} 

    \begin{subfigure}{0.48\textwidth}
        \centering
        \includegraphics[width=\linewidth]{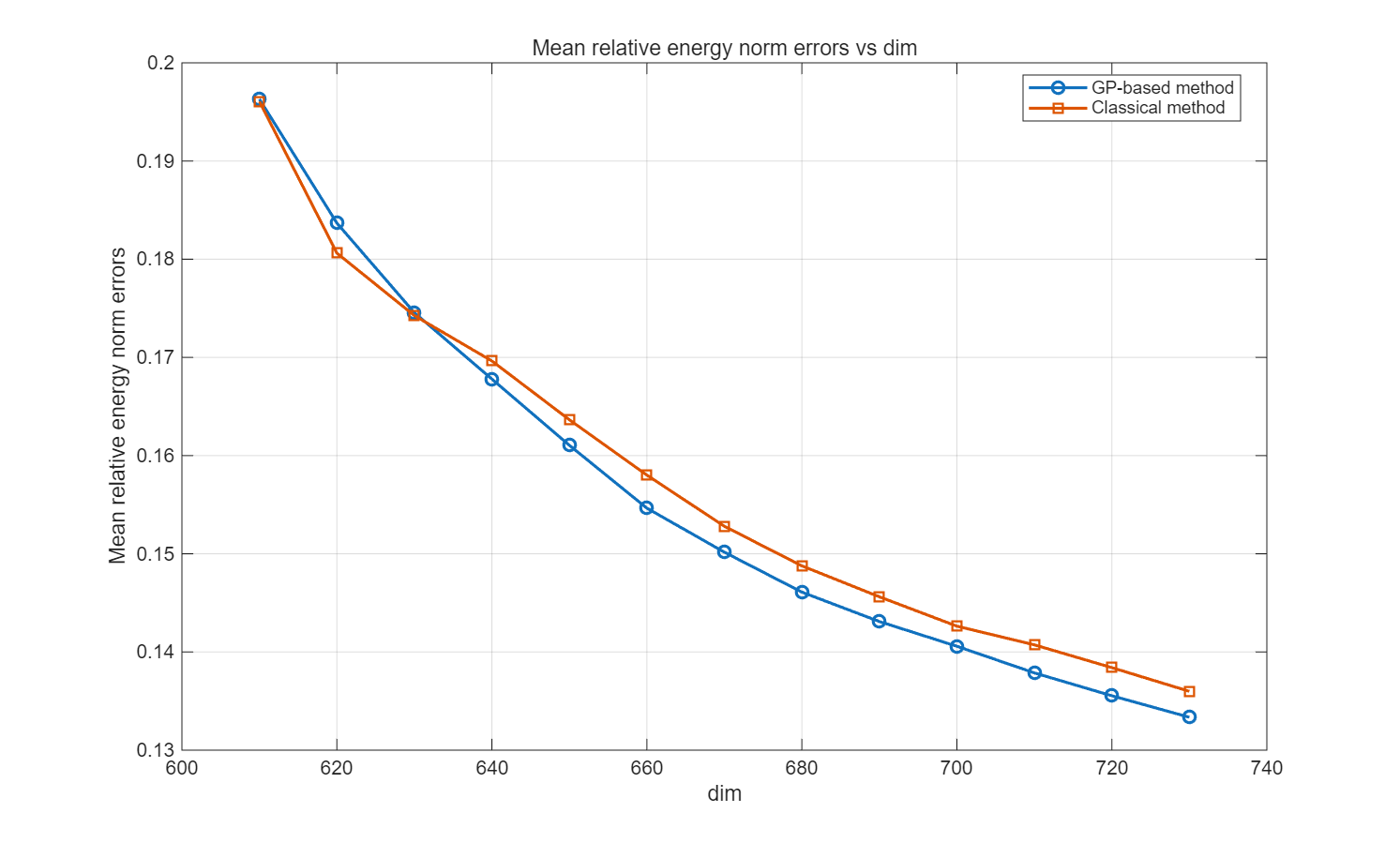}
        \caption{$\xi=\frac{1}{8}$,\;Mat\'ern 3/2}
        \label{subfig:1/8,3/2}
    \end{subfigure}
    \hfill
    \begin{subfigure}{0.48\textwidth}
        \centering
        \includegraphics[width=\linewidth]{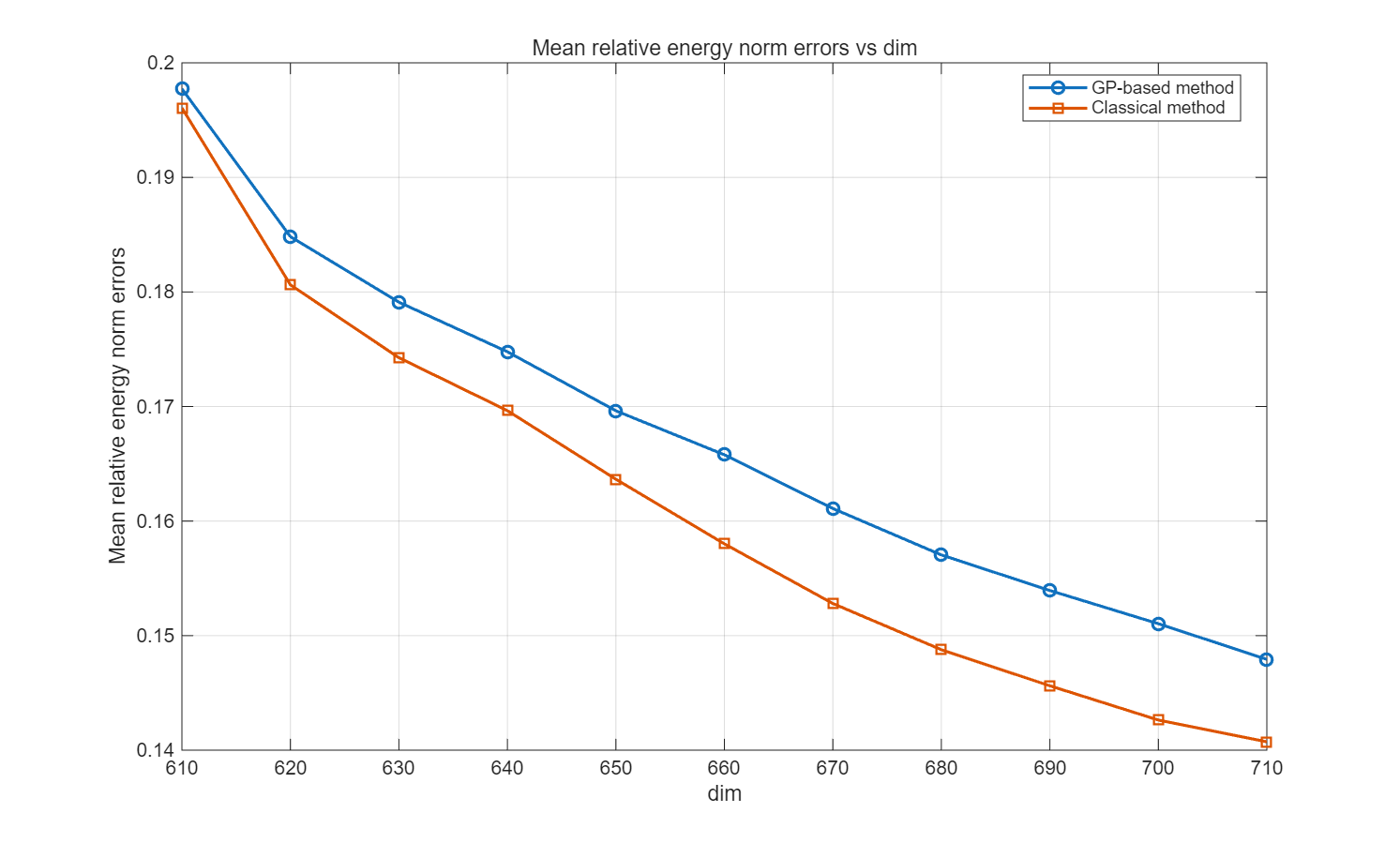}
        \caption{$\xi=\frac{1}{8}$,\;Mat\'ern 5/2}
        \label{subfig:1/8,5/2}
    \end{subfigure}

    \caption{Mean relative energy-norm errors on held-out coefficient samples}
    \label{fig:full_figure}
\end{figure}

\begin{table}[H]
    \centering
    \begin{tabular}{c|c|c|c|c}
        \toprule
         & $\xi=\frac{1}{4}$,Mat\'ern 3/2 & $\xi=\frac{1}{4}$,Mat\'ern 5/2 & $\xi=\frac{1}{8}$,Mat\'ern 3/2 & $\xi=\frac{1}{8}$,Mat\'ern 5/2 \\
        \midrule 
       Speedup  & 2.1 & 2.0 & 2.1 & 2.1 \\
       \bottomrule
    \end{tabular}
    \caption{Average online indicator-evaluation speedup on held-out samples (offline data generation and GP training excluded)}
    \label{tab:Speedup}
\end{table}

\section{Conclusions}

We introduced a non-intrusive GP posterior-mean surrogate for the local $H^{-1}$-residual indicator scores used to mark neighborhoods in offline-adaptive GMsFEM. The surrogate targets repeated indicator evaluation and ranking, while the multiscale solve, local spectral construction, and basis enrichment remain standard GMsFEM components. Compact local solution and spectral features permit batched neighborhood inference, and the fitted model can be reused across repeated admissible queries.

On the controlled held-out coefficient samples, surrogate-guided enrichment gives error-versus-DoF trends comparable with the classical residual-indicator procedure. The Mat\'ern $3/2$ curves are more favorable than the Mat\'ern $5/2$ curves in the four reported configurations, and the measured online indicator-evaluation speedup is $2.0$--$2.1$, excluding offline data generation and GP training.

The analysis retains the classical residual estimator for the exact indicators of the space produced by surrogate-guided enrichment. Under explicit pointwise score-accuracy bounds, the nonuniform perturbed D\"orfler result shows how the predicted marking rule captures exact residual indicator mass. A bounded-discrepancy KRR estimate supplies one conditional route to these score bounds under the additional assumptions stated in Section \ref{subsec:conditional-krr-bound}.

The present evidence is confined to the reported coefficient generator, mesh, source, and train--test settings, and the timing result is component-level rather than end-to-end. Applying the surrogate to a materially different distribution or discretization requires reassessing training coverage and generally retraining the model. Future work includes computable score-bound diagnostics, positivity-preserving and uncertainty-aware marking, broader distributional tests, and scalable GP training for larger offline data sets.

\section*{Acknowledgement}

The research of Eric Chung is partially supported by the Hong Kong RGC General Research Fund (Projects: 14305423 and 14305624).

\bibliographystyle{unsrt}
\bibliography{GPReference} 

\end{document}